\theoremstyle{plain}
\newtheorem{theorem}{Theorem}[section]
\newtheorem{corollary}[theorem]{Corollary}
\newtheorem{lemma}[theorem]{Lemma}
\newtheorem{proposition}[theorem]{Proposition}
\theoremstyle{definition}
\newtheorem{remark}[theorem]{Remark}
\theoremstyle{remark}
\newtheorem*{example}{Example}
\newcommand{\abs}[1]{\lvert#1\rvert}
\newcommand{\norm}[1]{\lVert#1\rVert}
\newcommand{\inner}[2]{\langle #1\rangle #2}
\newcommand{\C}{\mathbb{C}}
\newcommand{\N}{\mathbb{N}}
\newcommand{\B}{\mathcal{B}}
\newcommand{\veps}{\varepsilon}
\newcommand{\la}{\lambda}
\newcommand{\tow}{\stackrel{w^*}{\longrightarrow}}
\newcommand{\wto}{\stackrel{w}{\longrightarrow}}
\DeclareMathOperator{\Range}{Range}
\begin{document}
\baselineskip 18pt
\title{On quasinilpotent operators and the invariant subspace problem}

\author[A.~Tcaciuc]{Adi Tcaciuc}
\address[A. Tcaciuc]{Mathematics and Statistics Department,
   Grant MacEwan University, Edmonton, Alberta, Canada T5J
   P2P, Canada}
\email{tcaciuca@macewan.ca}

\keywords{Operator, invariant subspace, finite rank, perturbation}
\subjclass[2010]{Primary: 47A15. Secondary: 47A55}

\begin{abstract}
We  show  that a  bounded quasinilpotent operator $T$  acting  on  an  infinite
dimensional Banach space has an invariant subspace if and only if there exists a rank one operator $F$ and a scalar $\alpha\in\mathbb{C}$, $\alpha\neq 0$, $\alpha\neq 1$, such that $T+F$ and $T+\alpha F$ are also quasinilpotent. We also prove that for any fixed rank-one operator $F$, almost all perturbations $T+\alpha F$ have invariant subspaces of infinite dimension and codimension.
\end{abstract}

\maketitle


\section{Introduction}\label{intro}

One of the most important unsolved problem in Operator Theory is the \emph{Invariant Subspace Problem}: Does every bounded operator on an infinite  dimensional, separable, complex Hilbert space have a non-trivial invariant closed subspace?  Von Neumann proved the existence of such subspaces for compact operators acting on a separable Hilbert space, a result which was extended by Aronszajn and Smith \cite{AS54} to separable Banach spaces. Lomonosov \cite{L73} greatly increases the class of operators with invariant subspaces by showing that every operator commuting with a compact operator has an invariant subspace. Enflo (\cite{E76, E87}, see also \cite{B85}) constructed the first
example of a bounded operator on a (non-reflexive) Banach space which has no non-trivial invariant subspaces, followed by a construction by Read \cite{R84}. Later Read produced several such examples: strictly singular operators, quasinilpotent operators, and operators acting on $l_1$ (see \cite{R85}, \cite{R97},\cite{R91}). All these examples are on non-reflexive Banach spaces, and the Invariant Subspace Problem is still open for general reflexive Banach spaces. For an overview of the Invariant Subspace Problem see the monographs by Radjavi and Rosenthal \cite{RR03} or the more recent book by Chalendar and Partington \cite{CP11}.

A very important special case for which the Invariant Subspace Problem is still open is that of \emph{quasinilpotent} operators on Hilbert spaces, or, more generally, on reflexive Banach spaces. An operator $T$ is called \emph{quasinilpotent} if $\sigma(T)=\{0\}$, where by $\sigma(T)$ we denote the spectrum of $T$. Substantial work has been devoted over the years to ISP for quasinilpotent operators, in particular on Hilbert spaces. We mention several important papers, without attempting to provide an exhaustive list: Apostol and Voiculescu \cite{AV74}, Herrero \cite{H78}, Foia\c{s} and Pearcy \cite{FP74}, Foia\c{s}, Jung, Ko, and Pearcy,\cite{JKP03, FJKP04, FJKP05}.

In Section \ref{mainsection} we develop a method of investigating invariant subspaces for quasinilpotent operators on complex Banach spaces by examining the resolvent function. In our main result in this section, Theorem \ref{main}, we prove a necessary and sufficient condition for a quasinilpotent operator to have invariant subspaces, a condition which is related to the stability of the spectrum under rank-one perturbations.

 Next we examine the existence of invariant \emph{half-spaces} for rank-one perturbations of quasinilpotent operators. By a \emph{half-space} we understand a closed subspace which is both infinite dimensional and infinite codimensional. A method of examining invariant half-spaces for finite rank perturbations  was introduced by Androulakis, Popov, Tcaciuc, and Troitsky in \cite{APTT09}, where the authors showed that certain classes of bounded operators have rank-one perturbations which admit invariant half-spaces. In \cite{PT13} Popov and Tcaciuc showed that \emph{every} bounded operator $T$ acting on a reflexive Banach space can be perturbed by a rank-one operator $F$ such that $T+F$ has an invariant half-space. Moreover, when a certain spectral condition is satisfied, $F$ can be chosen to have arbitrarily small norm. Recently these results were extended to general Banach spaces in \cite{T17}. In this line of investigation, Jung, Ko, and Pearcy \cite{JKP17, JKP18} adapted this theory to operators on Hilbert spaces, where the presence of additional structure and specific Hilbert space methods allowed them to prove important results regarding the matricial structure of arbitrary operators on Hilbert spaces. For algebras of operators this type of problems have been studied in \cite{P10}, \cite{MPR13}, and \cite{SW16}.  More control on the construction of rank-one perturbation that have invariant half-spaces was achieved in \cite{TW17}. In that paper the authors showed that for  any bounded operators $T$ with countable spectrum  acting on a Banach space $X$, and for any non-zero $x\in X$, one can find a rank one operator with range span$\{x\}$ such that $T+F$ has an invariant subspace.

 In Section \ref{perturb}, we refine the method developed in the previous section to show that \emph{almost all} (in a sense that is made precise in Theorem \ref{main2} below ) rank-one perturbations of quasinilpotent operators have invariant half-spaces.

\section{Invariant subspaces for quasinilpotent operators}\label{mainsection}
For a Banach space $X$, we denote by $\mathcal{B}(X)$ the algebra of all (bounded linear) operators on $X$. When  $T\in{\mathcal B}(X)$, we write $\sigma(T)$, $\sigma_p(T)$,$\sigma_{ess}(T)$, and $\rho(T)$  for the spectrum
of~$T$,  point spectrum of $T$, the essential point spectrum of $T$, and the resolvent set of~$T$, respectively. The closed span of a set  $\{x_n\}_n$ of vectors in $X$ is denoted by $[x_n]$.

 For $T\in\mathcal{B}(X)$, the \emph{resolvent} of $T$ is the function $R:\rho(T)\to\mathcal{B}(X)$ defined by $R(z)=(zI-T)^{-1}$. When $\abs{z}>r(T)$, where $r(T)$ is the spectral radius of $T$, the resolvent is given by the Neumann series expansion
  $$
  R(z)=(zI-T)^{-1}=\sum_{i=0}^{\infty}\frac{T^i}{z^{i+1}}.
  $$
  In particular, when $T$ is quasinilpotent this expansion holds for all complex numbers  $z\neq 0$. The resolvent $R$ is analytic on $\rho(T)$, hence on $\mathbb{C}\setminus\{0\}$ when $T$ is quasinilpotent.

We first prove a  simple lemma which gives sufficient and necessary conditions for $\lambda\in\rho(T)$ to be an eigenvalue for some fixed rank-one perturbation.

\begin{lemma} \label{eigen}
  Let $X$ be a separable Banach space, $T\in\mathcal{B}(X)$, and $F:=e^*\otimes f$ a rank one operator. Fix $\lambda\in\rho(T)$  and $\alpha\in\mathbb{C}\setminus\{0\}$. Then the following are equivalent:
 \begin{enumerate}
 \item $e^*((R(\la)f)=\alpha^{-1}$.
 \item $\la\in\sigma_p(T+\alpha F)$.
 \end{enumerate}
\end{lemma}

\begin{proof}

i)$\Rightarrow$ ii)
We are going to show that $y:=R(\la)f$ is an eigenvector for  $T+\alpha F$, corresponding to the eigenvalue $\la$. Note that $Ty=\lambda y -f$. Then:
$$
(T+\alpha F)y=Ty+\alpha e^*(y)f=\lambda y -f +\alpha e^*(y)f=\lambda y -f +\alpha \alpha^{-1} f=\lambda y.
$$

ii)$\Rightarrow$ i)

Let $y\in X$ be an eigenvector for $T+\alpha F$  corresponding to $\lambda$. Hence $Ty+\alpha e^*(y)f=\lambda y$. Note that since $\lambda\in\rho(T)$, it follows that $e^*(y)\neq 0$. We have:
$$
 Ty+\alpha e^*(y)f=\lambda y \Leftrightarrow (\lambda I-T)y=\alpha e^*(y)f \Leftrightarrow y=\alpha e^*(y)(\lambda I-T)^{-1}f.
$$

Applying $e^*$ to both sides of the last equality, we get that
$$
e^*(y)=\alpha e^*(y)e^*((\lambda I-T)^{-1}f),
$$
 and since $e^*(y)\neq 0$ it follows that $e^*(R(\la)f)=\alpha^{-1}$.

\end{proof}

\begin{remark}\label{remark1}
Note that when $e^*(R(\la)f)=\alpha^{-1}$, from the proof of the previous lemma it follows that $R(\la)f$ is an eigenvector for $T+\alpha F$ corresponding to the eigenvalue $\lambda$.
\end{remark}

We are now ready to prove the main theorem of this section.

\begin{theorem}\label{main}
Let $X$ be a separable Banach space and $T\in\mathcal{B}(X)$ a quasinilpotent operator. Then the following are equivalent:
\begin{enumerate}
  \item $T$ has an invariant subspace.
  \item There exists a rank one operator $F$ such that for any $\alpha\in\mathbb{C}$, $T+\alpha F$ is quasinilpotent.
  \item There exists a rank one operator $F$ and $\alpha\in\mathbb{C}$, $\alpha\neq 0$, $\alpha\neq 1$, such that $T+F$  and $T+\alpha F$ are quasinilpotent.
\end{enumerate}
\end{theorem}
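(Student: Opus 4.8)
The plan is to establish the three statements cyclically, as (i)$\Rightarrow$(ii)$\Rightarrow$(iii)$\Rightarrow$(i). The implication (ii)$\Rightarrow$(iii) is immediate: if $T+\alpha F$ is quasinilpotent for \emph{every} $\alpha\in\C$, then in particular it is so for $\alpha=1$ and for any single choice of $\alpha\neq 0,1$. Thus the work lies in (i)$\Rightarrow$(ii) and, above all, in (iii)$\Rightarrow$(i).

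For (i)$\Rightarrow$(ii), suppose $T$ has a nontrivial closed invariant subspace $Y$. The idea is to choose the perturbation so that it respects the two-sided structure imposed by $Y$. I would pick $0\neq f\in Y$ and a nonzero $e^*\in X^*$ with $e^*|_Y=0$ (both exist since $\{0\}\neq Y\neq X$), and set $F:=e^*\otimes f$. Then the range of $F$ lies in $Y$ while $Y\subseteq\Ker F$. Consequently $Y$ is invariant for $T+\alpha F$ for every $\alpha$, the restriction $(T+\alpha F)|_Y$ equals $T|_Y$, and the operator induced on $X/Y$ equals the one induced by $T$. Both of these are quasinilpotent, since the relevant operator norms are dominated by $\|T^n\|$ and hence have spectral radius $0$. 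Invoking the standard spectral containment $\sigma(S)\subseteq\sigma(S|_Y)\cup\sigma(S_{X/Y})$ for an invariant subspace $Y$, we conclude $\sigma(T+\alpha F)\subseteq\{0\}$, so $T+\alpha F$ is quasinilpotent for all $\alpha$.

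The heart of the theorem is (iii)$\Rightarrow$(i), and this is where I expect the main obstacle. Writing $F=e^*\otimes f$, I would study the scalar function $g(z):=e^*(R(z)f)$, which is analytic on $\rho(T)=\C\setminus\{0\}$. By Lemma \ref{eigen}, for a nonzero scalar $\beta$ and $\la\neq 0$ one has $\la\in\sigma_p(T+\beta F)$ precisely when $g(\la)=\beta^{-1}$. Since $T+F$ and $T+\alpha F$ are quasinilpotent, neither has a nonzero eigenvalue, so $g$ omits the two values $1$ and $\alpha^{-1}$ on $\C\setminus\{0\}$; the hypotheses $\alpha\neq 0$ and $\alpha\neq 1$ are used precisely to ensure these are two \emph{distinct} finite values. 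Expanding the Neumann series gives $g(z)=\sum_{n\ge 1}e^*(T^{n-1}f)\,z^{-n}$, valid for all $z\neq 0$ because $T$ is quasinilpotent; hence $h(w):=g(1/w)=\sum_{n\ge 1}e^*(T^{n-1}f)\,w^n$ is an entire function with $h(0)=0$, and it still omits $1$ and $\alpha^{-1}$ on all of $\C$.

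The decisive step is then Picard's little theorem: a nonconstant entire function omits at most one complex value, so $h$ must be constant, and since $h(0)=0$ we obtain $h\equiv 0$, i.e. $e^*(T^{n}f)=0$ for all $n\ge 0$. I would finish by taking the cyclic subspace $Y:=[\,T^n f:n\ge 0\,]$, which is closed and $T$-invariant; it is nonzero because $f\neq 0$, and it is proper because the nonzero functional $e^*$ annihilates it. Hence $Y$ is a nontrivial invariant subspace for $T$. The main obstacle here is conceptual rather than computational: recognizing that the two quasinilpotency conditions translate, via Lemma \ref{eigen}, into an omitted-values statement for a single analytic function, so that Picard's theorem can be brought to bear. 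Once that bridge is in place, the production of the invariant subspace is routine.
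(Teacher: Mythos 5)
Your proof is correct, but it follows a genuinely different route from the paper's in both nontrivial implications. For (i)$\Rightarrow$(ii), the paper fixes the same $F=e^*\otimes f$ with $f\in Y$ and $e^*|_Y=0$, but then computes $e^*(R(z)f)=0$ via the Neumann series and invokes Lemma \ref{eigen} together with stability of the essential spectrum under compact perturbations (so that absence of nonzero eigenvalues forces quasinilpotency); your block-triangular argument with the spectral inclusion $\sigma(S)\subseteq\sigma(S|_Y)\cup\sigma(S_{X/Y})$ bypasses the essential-spectrum machinery entirely. For the key implication (iii)$\Rightarrow$(i), the paper argues by contradiction: assuming no invariant subspace, it shows $e^*(T^nf)\neq 0$ infinitely often, so $g$ has an essential singularity at $0$, and then Picard's \emph{Great} Theorem yields that $T+\alpha F$ can be quasinilpotent for at most one nonzero $\alpha$. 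You instead argue directly: the substitution $w=1/z$ turns $g$ into an entire function $h$ omitting the two distinct values $1$ and $\alpha^{-1}$, so Picard's \emph{Little} Theorem forces $h\equiv 0$, i.e.\ $e^*(T^nf)=0$ for all $n$, and the cyclic subspace $[T^nf]$ is then a nontrivial invariant subspace. Your version is more elementary (Little rather than Great Picard, no essential-spectrum input) and constructive—it exhibits the invariant subspace explicitly and even shows the same $F$ from (iii) witnesses (ii). What the paper's heavier argument buys is quantitative spectral information—for all but at most one nonzero $\alpha$, the point spectrum of $T+\alpha F$ is countably infinite—which is not needed for Theorem \ref{main} itself but is reused in Proposition \ref{3options} and in the proof of Theorem \ref{main2}.
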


\begin{proof}

 Note first that since $\sigma_{ess}(T)=\{0\}$, and the essential spectrum is stable under compact perturbations, it follows that for any $\alpha\in\mathbb{C}$ and any rank-one operator $F$, $\sigma_{ess}(T+\alpha F)=\{0\}$. Therefore $\sigma(T+\alpha F)$ is at most countable with $0$ the only accumulation point, and any $\lambda\in\sigma(T+\alpha F)\setminus\{0\}$ is an eigenvalue((see e.g. \cite{AA02}, Corollary 7.49 and 7.50). Hence, the condition that $T+\alpha F$ is quasinilpotent, is equivalent to $\sigma_p(T+\alpha F)\setminus \{0\}=\emptyset$.

i) $\Rightarrow$ ii)

Suppose $Y$ is a non-trivial invariant subspace for $T$. Pick $f\in Y$, and $e^*\in X^*$ such that $e^*(Y)=0$. Let $F$ be the rank one operator defined by $F:=e^*\otimes f$. Then, since $Y$ is $T$-invariant and $f\in Y$, we have that the orbit $(T^nf)$ is contained in $Y$, hence for all $n\in\mathbb{N}$, $e^*(T^nf)=0$. It follows that, for any $z\in\C\setminus\{0\}$ we have:
\begin{equation}\label{zero}
  e^*(R(z)f)=e^*\left(\sum_{i=0}^{\infty}\frac{T^i f}{z^{i+1}}\right)=\sum_{i=0}^{\infty}\frac{e^*(T^i f)}{z^{i+1}}=0.
\end{equation}

Fix $\alpha\neq 0$, arbitrary. From (\ref{zero})  and Lemma \ref{eigen} it now follows that for any $z\in\C\setminus\{0\}$ we have that $z\notin\sigma_p(T+\alpha F)$. Therefore, for any $\alpha\neq 0$ we have that $\sigma_p(T+\alpha F)\setminus \{0\}=\emptyset$, hence $T+\alpha F$ is quasinilpotent.

ii) $\Rightarrow$ iii) obvious

iii) $\Rightarrow$ i) We argue by contradiction. Assume that $T$ has no invariant subspaces, and fix $F:=e^*\otimes f$ an arbitrary rank one operator. Since $T$ has no invariant subspaces it follows that $e^*(T^nf)\neq 0$ for infinitely many values of $n$. Indeed, otherwise there exist $k\in\mathbb{N}$ such that $e^*(T^jf)=0$ for all $j\geq k$. However this means that the closed span of $(T^jf)_{j\geq k}$ is contained in the kernel of $e^*$, thus it would be a non-trivial $T$-invariant subspace, contradicting the assumption.

To simplify the notation, we denote by $g:\C\setminus\{0\}\to\C$ the analytic function defined by $g(z)=e^*(R(z)f)$. Note that $g$ has an isolated singularity at $z=0$ and its Laurent series about $z=0$ is

 $$
    g(z)=\sum_{i=0}^{\infty}\frac{1}{z^{i+1}}e^*(T^if).
 $$

Since $e^*(T^nf)\neq 0$ for infinitely many values of $n$, it follows that the Laurent expansion of $g$ will have infinitely many non-zero terms of the form $\frac{1}{z^{i+1}}e^*(T^if)$. Therefore $z=0$ is an isolated \emph{essential} singularity for $g$. From Picard's Great Theorem it follows that $g$ attains any value, with possibly one exception, infinitely often, in any neighbourhood of $z=0$.  Hence, for all $\alpha\neq 0$, with possibly one exception, the set $\{z\in\mathbb{C} :  g(z)=\alpha^{-1}\}$ is infinite. Note that this set is in fact \emph{countably} infinite, as it is easy to see that in Picard's Theorem the values can be attained at most countably many times. Therefore $\sigma_p(T+\alpha F)\setminus\{0\}=\{z\in\mathbb{C} :  g(z)=\alpha^{-1}\}$ is countably infinite for all $\alpha$, with possibly one exception, so $T+\alpha F$ is quasinilpotent for at most one non-zero value $\alpha$. Since $F$ was arbitrary, this contradicts iii), and the implication is proved.
\end{proof}

The techniques employed in the proof of the previous theorem also gives the following characterization of the spectrum of rank-one perturbation of quasinilpotent operators.

\begin{proposition} \label{3options}
Let $X$ be a Banach space, $T\in\mathcal{B}(X)$ a quasinilpotent operator, and $F:=e^*\otimes f$ a rank one operator. Then exactly one of the following three possibilities holds:
\begin{enumerate}
  \item For all $\alpha\in\mathbb{C}$, $T+\alpha F$ is quasinilpotent.
  \item For all non-zero $\alpha\in\mathbb{C}$, with possibly one exception, $\sigma_p(T+\alpha F)$ is countably infinite.
  \item There exists $K\in\N$  such that for all non-zero $\alpha\in\mathbb{C}$, $0<\abs{\sigma_p(T+\alpha F)\setminus\{0\}}<K$.
\end{enumerate}
\end{proposition}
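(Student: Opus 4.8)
The plan is to read the trichotomy directly off the type of the isolated singularity at $z=0$ of the scalar function $g(z)=e^*(R(z)f)$ used in the proof of Theorem \ref{main}. As recorded there, $g$ is analytic on $\C\setminus\{0\}$ and its Laurent expansion about $0$,
$$
g(z)=\sum_{i=0}^{\infty}\frac{e^*(T^if)}{z^{i+1}},
$$
consists only of strictly negative powers of $z$; in particular its regular part vanishes identically. Since $T$ is quasinilpotent, $\rho(T)=\C\setminus\{0\}$, so Lemma \ref{eigen} gives, for every $\alpha\neq 0$, the identification
$$
\sigma_p(T+\alpha F)\setminus\{0\}=\{z\in\C\setminus\{0\}: g(z)=\alpha^{-1}\}.
$$
Thus the entire question reduces to counting, uniformly in $\alpha$, the solutions of $g(z)=\alpha^{-1}$.

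Because the regular part of $g$ is zero, there are exactly three mutually exclusive possibilities for the singularity at $0$, and I would treat each in turn. First, if $e^*(T^if)=0$ for all $i$, then $g\equiv 0$, the equation $g(z)=\alpha^{-1}$ has no solution for any $\alpha\neq 0$, and hence $T+\alpha F$ is quasinilpotent for every $\alpha$ (including $\alpha=0$); this is case (i). Second, if only finitely many coefficients $e^*(T^if)$ are nonzero but not all vanish, then $0$ is a pole of $g$ of some order $m\ge 1$, and clearing denominators writes $g(z)=q(z)/z^m$ with $q$ a polynomial of degree at most $m-1$ and $q(0)\neq 0$. For each $\alpha\neq 0$ the equation $g(z)=\alpha^{-1}$ is then equivalent to $\alpha^{-1}z^m-q(z)=0$, a polynomial of degree exactly $m$ whose constant term $-q(0)$ is nonzero; by the fundamental theorem of algebra it has between $1$ and $m$ distinct roots, all different from $0$. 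Taking $K=m+1$ yields $0<\abs{\sigma_p(T+\alpha F)\setminus\{0\}}<K$ for every $\alpha\neq 0$, which is case (iii).

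Third, if infinitely many coefficients $e^*(T^if)$ are nonzero, then $0$ is an essential singularity of $g$, and Picard's Great Theorem applies exactly as in the proof of Theorem \ref{main}: with at most one exceptional value, $g$ attains each $\alpha^{-1}$ infinitely, and in fact countably, often in every punctured neighbourhood of $0$. Hence for all $\alpha\neq 0$, save possibly one, the set $\sigma_p(T+\alpha F)\setminus\{0\}$ is countably infinite, so $\sigma_p(T+\alpha F)$ is countably infinite; this is case (ii). Since the three classifications of the singularity---identically zero, pole, essential---are exhaustive and mutually exclusive, exactly one of (i), (ii), (iii) holds, as required. I expect the only point requiring care to be the pole case: one must confirm both that at least one root always exists (so the count is strictly positive) and that the bound $m$ is uniform in $\alpha$, which is precisely what the nonvanishing constant term and the fixed degree $m$ of $\alpha^{-1}z^m-q(z)$ guarantee.
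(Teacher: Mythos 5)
Your proof is correct and follows essentially the same route as the paper's: both classify the singularity of $g(z)=e^*(R(z)f)$ at $0$ via the Laurent coefficients $e^*(T^if)$ (identically zero, pole, or essential) and invoke Lemma \ref{eigen} together with Picard's Great Theorem for the essential case. The only difference is cosmetic: in the pole case you spell out, via the polynomial $\alpha^{-1}z^m-q(z)$ with nonvanishing constant term, why the solution set is nonempty and uniformly bounded, a point the paper dismisses with \enquote{it is easy to see}.
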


\begin{proof}
From the proof of Theorem \ref{main} the options $(i)$ and $(ii)$ hold when  $e^*(T^nf)=0$ for all $n$, and when $e^*(T^nf)\neq 0$  for infinitely many values of $n$, respectively. It remains to examine the case when $e^*(T^nf)\neq 0$ for finitely, non-zero, values of $n$. Let $k>0$ be the smallest natural number such that $e^*(T^kf)\neq 0$ and $e^*(T^jf)= 0$ for all $j>k$. With the notations from Theorem \ref{main} it follows that:
$$
g(z)= e^*(R(z)f)=\sum_{i=0}^{k}\frac{1}{z^{i+1}}e^*(T^if).
$$
Therefore $z=0$ is a pole of order $k+1$ for $g$. In this case it is easy to see that for any $\alpha\neq 0$, the equation $g(z)=\alpha^{-1}$ has at most $k+1$ solutions,  hence the cardinality of the non-empty set $\sigma_p(T+\alpha F)\setminus\{0\}$ is at most $k+1$, and $(iii)$ holds.

\end{proof}

We will show in the next example  that the second option in Proposition \ref{3options} can indeed hold, and that the one exception is in general unavoidable.
\begin{example} \label{exa}
Let $H$ be a separable Hilbert space, denote by $(e_n)_n$ an orthonormal basis, and define $T\in\mathcal{B}(H)$ to be the weighted shift defined by
$$
Te_n=\frac{1}{n}e_{n+1},\ \text{ for } n=1,2,\dots
$$
It is easy to see that $T$ is a compact quasinilpotent operator. Consider the rank one operator $F\in\mathcal{B}(H)$ defined by $F(x):=\inner{x,f}e_1$, where $f=\sum_{n=1}^{\infty}\frac{1}{n}e_n$. We are going to show that $T-F$ is quasinilpotent, and that for any $\alpha\neq -1$ we have $\sigma_p(T+\alpha F)$ is countably infinite. From the previous considerations this is equivalent to showing that the function $g(z):=\inner{R(z)e_1, f}$ is analytic on $\C\setminus\{0\}$,  has an essential
singularity at $0$, and $g(z)\neq -1$ for all $z\in\C\setminus\{0\}$. We have
$$
R(z)e_1=\sum_{n=0}^{\infty}\frac{1}{z^{n+1}}T^ne_1=\sum_{i=0}^{\infty}\frac{1}{z^{n+1}}\frac{1}{n!}e_{i+1}.
$$
Therefore
$$
g(z):=\inner{R(z)e_1, f}=\inner{\sum_{n=0}^{\infty}\frac{1}{z^{n+1}}\frac{1}{n!}e_{n+1},\sum_{n=1}^{\infty}\frac{1}{n}e_n}=
\sum_{n=1}^{\infty}\frac{1}{n!}\frac{1}{z^n}=\exp(1/z)-1.
$$
Clearly $g$ has an essential singularity at $z=0$ and $g(z)\neq -1$ for any $z\in\C\setminus\{0\}$.
\end{example}

\section{Invariant half-spaces for rank one perturbations}\label{perturb}

We next turn our attention to the study of invariant half-spaces of rank-one perturbations of quasinilpotent operators.  First recall some standard notations and definitions. A sequence $(x_n)_{n=1}^{\infty}$ in $X$ is called a \emph{basic sequence} if any $x\in[x_n]$ can be written uniquely as $x=\sum_{n=1}^{\infty} a_n x_n$, where the convergence is in norm (see \cite[section 1.a]{LT77} for background on Schauder bases and basic sequences). As $[x_{2n}]\cap[x_{2n+1}]=\{0\}$ it is immediate that $[x_{2n}]$ is of both infinite dimension and infinite codimension in $[x_n]$, thus a half-space, and since every Banach space contains a basic sequence, it follows that every infinite dimensional Banach space contains a half-space.

An important tool that we are going to use is the following criterion of Kadets and Pe{\l}czy\'{n}ski for a subset of Banach space to contain a basic sequence (see, e.g., \cite[Theorem 1.5.6]{AK06})

 \begin{theorem}[Kadets, Pe{\l}czy\'{n}ski] \label{criterion}
 Let $S$ be a bounded subset of a Banach space $X$ such that $0$ does not belong to the norm closure of $S$. Then the following
 are equivalent:
 \begin{enumerate}
 \item $S$ fails to contain a basic sequence,
 \item The weak closure of $S$ is weakly compact and fails to contain $0$.
 \end{enumerate}
 \end{theorem}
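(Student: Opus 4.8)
The plan is to prove the two implications separately, using the Eberlein--\v{S}mulian theorem as the bridge between weak compactness and statements about sequences, and the Bessaga--Pe{\l}czy\'nski technique to manufacture basic sequences.

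First I would dispose of (ii)$\Rightarrow$(i), the softer direction. Assuming $\overline{S}^{\,w}$ is weakly compact and avoids $0$, take any sequence $(x_n)$ in $S$; by Eberlein--\v{S}mulian some subsequence satisfies $x_{n_k}\wto x$ with $x\in\overline{S}^{\,w}$, so $x\neq 0$. The key observation is that a basic sequence which converges weakly must converge to $0$: if $(x_{n_k})$ were basic with coordinate functionals $(x_j^*)$, then for fixed $j$ one has $x_j^*(x_{n_k})=0$ for $k>j$, whence $x_j^*(x)=\lim_k x_j^*(x_{n_k})=0$; since the closed span $[x_{n_k}]$ is weakly closed, $x\in[x_{n_k}]$, and its expansion has all coordinates zero, forcing $x=0$, a contradiction. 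Thus no sequence in $S$ can be basic, which is (i).

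For (i)$\Rightarrow$(ii) I would argue by contraposition, building a basic sequence inside $S$ whenever (ii) fails. Note first that $0\notin\overline{S}^{\,\|\cdot\|}$ furnishes a uniform lower bound $\norm{s}\ge\delta>0$ on $S$, so any extracted sequence is automatically seminormalized. Failure of (ii) splits into two cases. If $0\in\overline{S}^{\,w}$, then every weak neighbourhood of $0$, i.e.\ every requirement $\abs{f_1(x)},\dots,\abs{f_k(x)}<\eta$, is met by some $x\in S$; I would run the gliding-hump construction, at each stage selecting finitely many functionals that nearly norm the current finite-dimensional span of the vectors already chosen, and then picking the next vector from $S$ on which these functionals are tiny. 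Choosing tolerances $\varepsilon_n$ with $\prod(1+\varepsilon_n)<\infty$ produces a basic sequence in $S$ of finite basis constant. If instead $\overline{S}^{\,w}$ is not weakly compact, Eberlein--\v{S}mulian yields a sequence in $S$ with no weakly convergent subsequence; Rosenthal's $\ell_1$ theorem then gives either a subsequence equivalent to the unit vector basis of $\ell_1$ (already basic) or a nontrivial weakly Cauchy subsequence, whose weak-$*$ limit $x^{**}\in X^{**}$ satisfies $\dist(x^{**},X)>0$, and this positive distance lets me select norming functionals realizing a summing-basis pattern, hence a basic subsequence of the original terms.

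The hard part, and the technical heart of the argument, is the basic-sequence selection in both cases of (i)$\Rightarrow$(ii). The delicate points are that the basic sequence must consist of genuine elements of $S$ (so the convenient device of passing to differences $x_{n+1}-x_n$, which need not lie in $S$, is unavailable), and that the accumulated perturbation errors must be kept summable so that the limiting basis constant stays finite. Establishing the quantitative \emph{almost independence} lemma---that a vector on which a suitable finite norming family is small can be appended to a basic sequence with only a small increase in basis constant---is where the real work lies; the Eberlein--\v{S}mulian and Rosenthal inputs, though essential, can be quoted as known.
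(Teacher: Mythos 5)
The paper gives no proof of this statement: it is quoted as a known criterion of Kadets and Pe{\l}czy\'{n}ski with a citation to \cite[Theorem 1.5.6]{AK06}, and is then used as a black box in the proof of Theorem \ref{main2}. So your attempt can only be compared with the standard textbook argument. Your direction (ii)$\Rightarrow$(i) is complete and correct: Eberlein--\v{S}mulian plus the observation that a weakly convergent basic sequence must have weak limit $0$ (the coordinate functionals, extended by Hahn--Banach, kill the tail, and the closed span is weakly closed). Your case split for (i)$\Rightarrow$(ii) is also the right one, and Case 1 ($0\in\overline{S}^{\,w}$) is exactly Mazur's selection technique. Where you depart from the standard route is Case 2: the textbook proof does not use Rosenthal's $\ell_1$ theorem. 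When $\overline{S}^{\,w}$ fails to be weakly compact, it takes a point $x^{**}$ in the weak-$*$ closure of $S$ in $X^{**}$ with $x^{**}\notin X$ (such a point must exist, for otherwise that weak-$*$ compact set would lie in $X$, coincide with $\overline{S}^{\,w}$, and be weakly compact), and feeds $x^{**}$ --- together with the point $0$ from the other case --- into one unified selection lemma. Your detour through Eberlein--\v{S}mulian and Rosenthal does land in the same place (a sequence of elements of $S$ converging weak-$*$ to some $x^{**}\notin X$), so it is valid, but it invokes a theorem far deeper than the one being proved and still requires the same selection lemma at the end; it costs more and buys nothing.

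The genuine soft spot is your closing paragraph, where the technical core is deferred and misidentified. The \enquote{almost independence} lemma you state --- a vector on which a suitable finite norming family is \emph{small} can be appended with basis-constant increase $1+\veps_n$ --- is the correct lemma only for the case $0\in\overline{S}^{\,w}$. In the nontrivial weakly Cauchy case the vectors you select are \emph{not} small on the norming functionals: they are all close to $x^{**}\neq 0$ there. The basis inequality must instead be extracted from $d:=\dist(x^{**},X)>0$ via the two estimates $\norm{u+ax^{**}}\ge \abs{a}\,d$ (valid because $u$, a combination of previously chosen vectors, lies in $X$) and $\norm{u+ax^{**}}\ge\norm{u}-\abs{a}\,\norm{x^{**}}$, which combine to give $\norm{u+ax^{**}}\ge c\norm{u}$ with $c$ of order $\min\bigl(1/2,\, d/(2\norm{x^{**}})\bigr)$, after which one compares $\norm{u+ax_{m+1}}$ to $(1-\veps)\norm{u+ax^{**}}-\abs{a}\delta$ using the norming family. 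The resulting basis constant is of order $\norm{x^{**}}/d$, not $1+{}$small, so \enquote{only a small increase in basis constant} and summability of perturbations is not the mechanism in that case; finiteness is all you get, and all you need. As written, your plan proves Case 1 and gestures at Case 2 with a lemma that does not apply to it; the inequality above is the missing ingredient.
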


As mentioned in the introduction, Tcaciuc and Wallis proved in \cite{TW17} the following theorem:

 \begin{proposition}\cite[Proposition 2.11]{TW17}
Let $X$ be an infinite-dimensional complex Banach space and $T\in\B(X)$ a bounded operator such that $\sigma(T)$ is countable and $\sigma_p(T)=\emptyset$. Then for any nonzero $x\in X$ and any $\varepsilon>0$ there exists $F\in\B(X)$ with $\norm{F}<\veps$ and $\Range(F)=[x]$, and such that $T+F$ admits an invariant half-space.
\end{proposition}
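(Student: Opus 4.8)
The plan is to realize the perturbation as $F = e^\ast \otimes x$ for a suitable functional $e^\ast \in X^\ast$, so that automatically $\Range(F) = [x]$, and then to force $T+F$ to possess infinitely many eigenvalues whose eigenvectors span an invariant half-space. By Lemma \ref{eigen} (with $\alpha = 1$) together with Remark \ref{remark1}, a point $\mu \in \rho(T)$ lies in $\sigma_p(T+F)$ with eigenvector $R(\mu)x$ precisely when $e^\ast(R(\mu)x) = 1$. I would therefore select a sequence $(\mu_n) \subseteq \rho(T)$ and $e^\ast$ so that: (a) $e^\ast(R(\mu_n)x) = 1$ for all $n$; (b) the eigenvectors $v_n := R(\mu_n)x$ form a basic sequence; and (c) $\norm{e^\ast}$ is as small as we wish. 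Granting these, I set $Y := [v_{2n}]$: each $v_{2n}$ is an eigenvector of $T+F$, so $Y$ is $(T+F)$-invariant; it is infinite-dimensional because $(v_n)$ is basic, and infinite-codimensional because the omitted vectors $v_{2n+1}$ are linearly independent modulo $Y$ (the fact, recalled just before Theorem \ref{criterion}, that the even-indexed part of a basic sequence spans a half-space). Thus $Y$ is the desired invariant half-space.

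The first technical point is to arrange that the eigenvectors have large norm, which is what will let me take $\norm{e^\ast}$ small. Here the hypothesis $\sigma_p(T) = \emptyset$ is essential: it forces $T$ to have the single-valued extension property (an analytic $X$-valued $h$ with $(\mu I - T)h(\mu) \equiv 0$ would produce a continuum of eigenvalues), so the local resolvent $\mu \mapsto R(\mu)x$ admits a maximal analytic extension to $\C\setminus\sigma_T(x)$ with nonempty local spectrum $\sigma_T(x) \subseteq \sigma(T)$. Choosing $\lambda_0$ an isolated point of the countable compact set $\sigma_T(x)$, the extended function has a non-removable singularity at $\lambda_0$, hence is unbounded on every punctured disc about $\lambda_0$; since $\sigma(T)$ is countable, $\rho(T)$ is dense there, and I obtain $\mu_n \in \rho(T)$ with $\mu_n \to \lambda_0$ and $\norm{R(\mu_n)x} \to \infty$. (Alternatively one avoids local spectral theory: if $\norm{R(\mu)x}$ stayed bounded near $\sigma(T)$ then, $\sigma(T)$ being countable and hence removable for bounded analytic functions, each scalar function $\mu \mapsto \phi(R(\mu)x)$ would extend to an entire function vanishing at infinity, forcing $R(\mu)x = 0$.)

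The heart of the argument is extracting a basic sequence from the $v_n = R(\mu_n)x$, and again $\sigma_p(T) = \emptyset$ does the work. Passing to the normalizations $u_n := v_n/\norm{v_n}$, which are bounded and bounded away from $0$, I apply the Kadets--Pe{\l}czy\'{n}ski criterion, Theorem \ref{criterion}. If $(u_n)$ contained no basic subsequence, its weak closure would be weakly compact and exclude $0$, so by Eberlein--\v{S}mulian some subsequence would satisfy $u_{n_k} \wto u$ with $u \neq 0$. But from $(\mu_{n_k} I - T)v_{n_k} = x$ I get $(\mu_{n_k} I - T)u_{n_k} = x/\norm{v_{n_k}} \to 0$ in norm, while the left-hand side tends weakly to $(\lambda_0 I - T)u$ (using $\mu_{n_k} \to \lambda_0$ and the weak continuity of $T$); hence $(\lambda_0 I - T)u = 0$ with $u \neq 0$, contradicting $\sigma_p(T) = \emptyset$. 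So $(v_n)$ has a basic subsequence, which I rename $(v_n)$. To achieve (a) and (c) at once I pass to a further subsequence along which $\norm{v_n} \to \infty$ rapidly, take the coordinate functionals $v_n^\ast$ of this basic sequence (with $\norm{v_n^\ast} \le 2K/\norm{v_n}$, $K$ the basis constant), extend them to $X^\ast$ by Hahn--Banach, and set $e^\ast := \sum_n v_n^\ast$; absolute convergence makes $\norm{e^\ast} < \veps/\norm{x}$ while biorthogonality gives $e^\ast(v_n) = 1$, so $F := e^\ast \otimes x$ satisfies $\norm{F} < \veps$ and $\Range(F) = [x]$.

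The main obstacle is the basic-sequence extraction together with the accompanying norm blow-up: both are precisely the places where $\sigma_p(T) = \emptyset$ is indispensable, since a bounded resolvent orbit or a nonzero weak limit of normalized eigenvectors would each manufacture an eigenvector of $T$. By contrast, once a basic sequence of large-norm eigenvectors is in hand, building the small-norm functional and splitting off a half-space via even indices are routine.
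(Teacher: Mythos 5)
This statement is quoted by the paper from \cite{TW17}; the paper itself contains no proof of it, so your proposal can only be measured against the cited source and against the closely related techniques the paper does use (in Theorem \ref{main2}). Your argument is correct, and it is essentially a reconstruction of the Tcaciuc--Wallis proof: prescribe the eigenvalues first by producing $\mu_n\in\rho(T)$ with $\norm{R(\mu_n)x}\to\infty$, extract a basic sequence from the resolvent vectors $v_n=R(\mu_n)x$, and then manufacture a small functional $e^*$ with $e^*(v_n)=1$ so that $F=e^*\otimes x$ turns every $\mu_n$ into an eigenvalue of $T+F$ with eigenvector $v_n$; the invariant half-space is $[v_{2n}]$. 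The delicate points all check out: $\sigma_p(T)=\emptyset$ gives the single-valued extension property, the local spectrum $\sigma_T(x)$ is nonempty (Liouville) and is a countable compact set, an isolated point of it is a non-removable singularity of the local resolvent, and density of $\rho(T)$ lets you place the blow-up sequence inside $\rho(T)$; the Kadets--Pe{\l}czy\'{n}ski/Eberlein--\v{S}mulian dichotomy combined with the weak-limit computation $(\lambda_0 I-T)u=0$, $u\neq 0$, rules out failure of basic-sequence extraction exactly as in the paper's proof of Theorem \ref{main2}; and the coordinate-functional estimate $\norm{v_n^*}\le 2K/\norm{v_n}$ along a rapidly growing subsequence gives $\norm{e^*}<\veps/\norm{x}$ while preserving biorthogonality. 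The instructive contrast with Theorem \ref{main2} of this paper is the direction of the construction: there the rank-one operator $F$ is given in advance and the eigenvalues of $T+\alpha F$ arise for free, via Picard's theorem applied to $g(z)=e^*(R(z)f)$ at its essential singularity, whereas here the perturbing functional must be built after the fact to make chosen points into eigenvalues; that is precisely where the hypothesis that $\sigma(T)$ is countable (rather than quasinilpotence) enters, through your removability/local-spectral argument forcing $\norm{R(\mu)x}$ to blow up at some point of $\sigma(T)$.
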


When $X$ is reflexive, a companion result, \cite[Proposition 2.12]{TW17} allows one to (separately) control the kernel of the perturbation. Our main result in this section shows that for quasinilpotent operators we can control \emph{both} the range and the kernel at the same time, in a very strong way: with at most two exceptions, all perturbations by scalar multiples of a fixed rank-one operator have invariant half-spaces. Also note that this results holds in general Banach spaces. No reflexivity condition is needed.

\begin{theorem} \label{main2}
Let $X$ be a separable Banach space and $T\in\mathcal{B}(X)$ a quasinilpotent operator such that $\sigma_p(T)=\sigma_p(T^*)=\emptyset$.  Then for any rank one operator $F$, and any non-zero $\alpha\in\mathbb{C}$, with possibly two exceptions, $T+\alpha F$ has an invariant half-space.

\end{theorem}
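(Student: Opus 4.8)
The plan is to leverage the spectral analysis of the first two sections, combined with the Kadets--Pełczyński criterion (Theorem \ref{criterion}), to produce invariant half-spaces explicitly. First I would invoke Proposition \ref{3options}: since $\sigma_p(T) = \emptyset$ means $T$ itself has no eigenvalues, and since $T$ has no invariant subspace is \emph{not} assumed, I need to locate which of the three options applies to the fixed rank-one operator $F = e^* \otimes f$. The degenerate option (i), where $T + \alpha F$ is quasinilpotent for all $\alpha$, corresponds to $e^*(T^n f) = 0$ for all $n$; but as in the proof of Theorem \ref{main} this forces $[T^n f]$ to be a $T$-invariant subspace contained in $\ker e^*$, and one checks this subspace is in fact a half-space, so the conclusion holds trivially (with no exceptions). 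Option (iii), where the singularity of $g(z) = e^*(R(z)f)$ is a pole, similarly yields only finitely many eigenvalues for each $\alpha$, which should again be controllable. The main case is option (ii): for all non-zero $\alpha$ with at most one exception, $\sigma_p(T + \alpha F)$ is countably infinite.

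The heart of the argument is then, for such an $\alpha$, to build an invariant half-space out of the eigenvectors. By Lemma \ref{eigen} and Remark \ref{remark1}, whenever $g(z) = \alpha^{-1}$ the vector $R(z)f$ is an eigenvector of $T + \alpha F$ for the eigenvalue $z$. Since option (ii) gives countably infinitely many such $z$, accumulating only at $0$, I obtain a countable family of eigenvectors $\{R(z_k) f\}_k$ whose closed span $Y$ is $(T+\alpha F)$-invariant. The goal is to show $Y$ is a half-space, i.e.\ both infinite dimensional and infinite codimensional. Infinite dimensionality should follow because eigenvectors corresponding to distinct eigenvalues are linearly independent, so the span is genuinely infinite dimensional. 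For infinite codimensionality I would apply Theorem \ref{criterion}: I want to extract from the eigenvectors (after suitable normalization) a \emph{basic} sequence spanning a proper subspace, and I expect to use that the eigenvalues $z_k \to 0$ so that the associated eigenvectors $R(z_k) f$ behave controllably in norm, placing $0$ at a safe distance and preventing weak compactness of the relevant set.

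The assumption $\sigma_p(T^*) = \emptyset$ is where I expect the codimension control to come from, and this is the point that distinguishes the present theorem from the weaker results quoted before it. Concretely, I would split the eigenvectors into two interleaved subfamilies and aim to show that the closed span of one subfamily is a half-space inside the closed span of the whole family, exactly as in the basic-sequence observation in the text ($[x_{2n}]$ is a half-space in $[x_n]$). The condition $\sigma_p(T^*) = \emptyset$ should guarantee, via a duality/annihilator argument, that no nonzero functional vanishes on all the eigenvectors in a way that would collapse the codimension, thereby forcing the complementary span to remain infinite.

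\textbf{Main obstacle.} The hard part will be the half-space verification in option (ii): passing from a countable set of eigenvectors to a genuine \emph{basic} sequence via the Kadets--Pełczyński criterion, and in particular ruling out the pathology in which the eigenvectors' weak closure is weakly compact and contains $0$. The delicate estimate is controlling the norms of the eigenvectors $R(z_k) f$ as $z_k \to 0$ near the essential singularity, and it is precisely here that the hypothesis $\sigma_p(T^*) = \emptyset$ must be used to secure both the infinite dimension and the infinite codimension simultaneously. I also expect bookkeeping to be needed to account for the \emph{two} exceptional values of $\alpha$: one exception inherited from Picard's theorem / Proposition \ref{3options}(ii), and plausibly a second arising when the extracted eigenvector family fails to separate enough points to guarantee a half-space.
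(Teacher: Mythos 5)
Your skeleton is genuinely close to the paper's: the same case split according to whether $e^*(T^nf)$ vanishes for all large $n$, eigenvectors $R(z)f$ supplied by Lemma \ref{eigen} and Remark \ref{remark1}, the Kadets--Pe{\l}czy\'{n}ski criterion to extract a basic sequence, and the interleaving $[x_{2n}]$ to produce a half-space. But there is a genuine gap exactly at the point you flag as the main obstacle, and your plan for it cannot work. You have the Kadets--Pe{\l}czy\'{n}ski pathology backwards: by Theorem \ref{criterion}, a bounded set $S$ with $0$ outside its norm closure fails to contain a basic sequence precisely when $\overline{S}^{w}$ is weakly compact and does \emph{not} contain $0$. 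Now for $S_\alpha:=\{R(z)f : z\in\sigma_p(T+\alpha F)\setminus\{0\}\}$ every element satisfies $e^*(R(z)f)=\alpha^{-1}$, so every point of $\overline{S_\alpha}^{w}$ has $e^*$-value $\alpha^{-1}\neq 0$, and hence $0\notin\overline{S_\alpha}^{w}$ automatically. Consequently, if $S_\alpha$ is bounded and $\overline{S_\alpha}^{w}$ is weakly compact, the criterion \emph{guarantees} that no basic sequence exists inside $S_\alpha$: no norm estimate or duality argument will produce one. The paper's key idea, missing from your proposal, is that this bad case can occur for at most one $\alpha$: if it occurred for $\alpha\neq\beta$, pass to weakly convergent subsequences $h_n=R(\lambda_n)f\wto h$ and $k_n=R(\mu_n)f\wto k$; the identities $Th_n=\lambda_n h_n-f$ and $Tk_n=\mu_n k_n-f$ with $\lambda_n,\mu_n\to 0$ give $Th=Tk=-f$, while $e^*(h)=\alpha^{-1}\neq\beta^{-1}=e^*(k)$ gives $h\neq k$, so $T(h-k)=0$ contradicts $\sigma_p(T)=\emptyset$. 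This counting argument is precisely what produces the second exceptional value of $\alpha$; your guess that the second exception arises when the eigenvectors \enquote{fail to separate enough points} is not an argument.

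Two further misattributions are worth fixing. First, $\sigma_p(T^*)=\emptyset$ plays no role in the codimension control of the main case: once you have a basic sequence of eigenvectors, $[x_{2n}]$ has infinite dimension and infinite codimension for free. It is needed only in the degenerate case where $e^*(T^nf)=0$ for all $n>k$ --- which subsumes both your options (i) and (iii) of Proposition \ref{3options}: the tail $Y:=[T^nf]_{n>k}$ lies in $\ker e^*$, so it is invariant under $T+\alpha F$ for every $\alpha$ simultaneously, and the hypotheses $\sigma_p(T)=\sigma_p(T^*)=\emptyset$ force this nontrivial $T$-invariant subspace to be a half-space. Your treatment of option (iii) (\enquote{should again be controllable}) misses this; the finitely many eigenvalues available for each $\alpha$ in that case span only finite-dimensional invariant subspaces and are useless for half-spaces. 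Second, when the norms $\norm{R(z_k)f}$ blow up (which can happen within option (ii)), the paper normalizes the eigenvectors and uses $\sigma_p(T)=\emptyset$, not $\sigma_p(T^*)=\emptyset$: if the normalized set had weakly compact weak closure, the weak limit $x$ of a subsequence would satisfy $Tx=0$, hence $x=0$, so $0$ does lie in that weak closure and Kadets--Pe{\l}czy\'{n}ski applies after all.
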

\begin{proof}
It is easy to check that  $\sigma_p(T)=\emptyset$ if and only if $T$ has no non-trivial finite dimensional invariant subspaces, and that  $\sigma_p(T^*)=\emptyset$ if and only if $T$ has no non-trivial finite codimensional invariant subspaces. Therefore we can conclude from the hypotheses that any non-trivial invariant subspace of $T$ must be a half-space.

Let $F=e^*\otimes f$ be a rank-one operator, and consider the orbit $(T^nf)$. If there exists $k\in\N$ such that $e^*(T^nf)=0$ for all $n>k$, then $Y:=[T^nf]_{n>k}$ is an invariant subspace for $T$, contained in the kernel of $F$. Therefore $Y$ is a $T$-invariant half-space, and it is also invariant for $T+\alpha F$, for any $\alpha\in\C$.

There remains to consider the situation when $e^*(T^nf)\neq 0$ for infinitely many values of $n$. In this case it follows from the proof of Theorem \ref{main} that for all non-zero $\alpha\in\mathbb{C}$, with possibly one exception, $\sigma_p(T+\alpha F)$ is countably infinite. Moreover, $0$ is the only accumulation point for $\sigma_p(T+\alpha F)$. Denote by $\mathbb{C}_0$ the set of all these values $\alpha$; in other words, $\mathbb{C}_0$ does not contain $0$, and at most one more other value, depending on $F$.

For any $\alpha\in\mathbb{C}_0$,  define the set $S_{\alpha}$ as
$$
S_{\alpha}:=\{R(z)f : z\in\sigma_p(T+\alpha F)\setminus\{0\}\}\subseteq X.
$$

Note from Remark \ref{remark1} that $S_\alpha$ is a set of (linearly independent) eigenvectors corresponding to all distinct eigenvalues from   $\sigma_p(T+\alpha F)\setminus\{0\}$. For any $z\in\sigma_p(T+\alpha F)\setminus\{0\}$ we have that $e^*(R(z)f)=\alpha^{-1}$, hence $\norm{R(z)f}\geq (|\alpha|\norm{e^*})^{-1}$. That is, for any $\alpha\in\C_0$, $S_{\alpha}$ is bounded away from zero, therefore $0$ does not belong to the norm closure of $S_{\alpha}$.  Define the following sets:
\begin{eqnarray}
 \nonumber A &:=& \{\alpha\in\C_0 : S_\alpha\ \text{is not bounded}\} \\
 \nonumber B &:=& \{\alpha\in\C_0 : S_\alpha\ \text{is bounded and } \overline{S_\alpha}^{w}\  \text{is not } w \text{-compact}\} \\
 \nonumber C &:=& \{\alpha\in\C_0 : S_\alpha\ \text{is bounded and } \overline{S_\alpha}^{w}\  \text{is } w \text{-compact}\}.
\end{eqnarray}
Clearly $\C_0=A\cup B\cup C$, and the union is disjoint. We are going to show that for any $\alpha\in A\cup B$, $T+\alpha F$ has an invariant half-space, and that $|C|\leq 1$.

Let first $\alpha\in A$. Denote $\sigma_p(T+\alpha F)=(\lambda_n)_n$, and note that we have that $\lambda_n\to 0$. We are going to show that $S_\alpha$ contains a basic sequence. Since $S_\alpha$ is not bounded, by passing to a subsequence we may assume that $\norm{R(\lambda_n)f}\to\infty$. For any $n\in\N$, denote by $x_n:=R(\lambda_n)f/\norm{R(\lambda_n)f}$, and put $W_\alpha:=\{x_n : n\in\N\}$.

 The set $W_\alpha$ is bounded.  If  $\overline{W_\alpha}^{w}$ is not weakly-compact, we can apply Kadets-Pe{\l}czy\'{n}ski criterion (Theorem \ref{criterion}) to conclude that $W_\alpha$ contains a basic sequence. Therefore, by passing to a subsequence, we can assume that $(x_n)$ is a basic sequence in $X$. Then $Y:=[x_{2n}]$ is a half-space which is invariant for $T+\alpha F$.

  If $\overline{W_\alpha}^{w}$ is weakly compact, then it is weakly sequentially compact by the Eberlein-\v{S}mulian theorem, and by passing to a subsequence we can assume that $x_n\wto x\in X$. It is easy to see that
\begin{equation}\label{equ}
  Tx_n=\lambda_n x_n - \frac{1}{\norm{R(\lambda_n)f}}f.
\end{equation}

Since $Tx_n\tow Tx$, $\lambda_nx_n\tow 0$, and $\norm{R(\lambda_n)f}\tow\infty$, it follows from (\ref{equ}) that $Tx=0$. However $0$ is not an eigenvalue for $T$, so we must have $x=0$. Hence $0\in\overline{W_\alpha}^{w}$, and again by the Kadets-Pe{\l}czy\'{n}ski criterion we have that $W_\alpha$ contains a basic sequence, and we finish up as in the case when  $\overline{W_\alpha}^{w}$ is not weakly-compact.

When $\alpha\in B$, therefore $S_\alpha$ is bounded and $\overline{S_\alpha}^{w}$ is not weakly compact, we can again apply the Kadets-Pe{\l}czy\'{n}ski criterion to conclude that $S_\alpha$ contains a basic sequence, and again finish up as before. Therefore, we have shown that for $\alpha\in A\cup B$, $T+\alpha F$ has an invariant half-space. There remains to show that $|C|\leq 1$.

Assume towards a contradiction that there exist $\alpha\neq\beta$ in $C$.  Denote by $(\lambda_n)$ and by $(\mu_n)$ the eigenvalues in $\sigma_p(T+\alpha F)\setminus\{0\}$ and $\sigma_p(T+\beta F)\setminus\{0\}$, respectively, and note that both $(\lambda_n)$ and $(\mu_n)$ converge to $0$. For each $n\in\mathbb{N}$, we define $h_n:=R(\lambda_n)f$, and by $k_n:=R(\mu_n)f$. We have
 \begin{equation} \label{equ2}
 Th_n=\lambda_n h_n-f {\rm \hskip 2cm and \hskip 2cm  } Tk_n=\mu_n k_n-f.
 \end{equation}

 Since $\overline{S_\alpha}^{w}$ and $\overline{S_\beta}^{w}$ are weakly compact, we can assume, by passing to subsequences, that $h_n\wto h$ and $k_n\wto k$. Note that for any $n\in\N$ we have that
 \begin{equation*}
   e^*(h_n)=g(\lambda_n)=\alpha^{-1} {\rm \hskip 2cm and \hskip 2cm  } e^*(h_n)=g(\mu_n)=\beta^{-1}.
 \end{equation*}
Therefore, $e^*(h)=\alpha^{-1}$ and $e^*(k)=\beta^{-1}$, and since $\alpha\neq\beta$, it follows that $h\neq k$. Taking weak limits in (\ref{equ2}), and taking into account that $\lambda_n\to 0$ and $\mu_n\to 0$, we get that $Th=-f$ and $Tk=-f$. Therefore $T(h-k)=0$, and since $h-k\neq 0$ it follows that $0$  is an eigenvalue for $T$, which is a contradiction since $\sigma_p(T)=\emptyset$. It follows that $|C|\leq 1$, and this completes the proof.
\end{proof}

While not explicitly stated in the previous theorem, note that, in particular, we can obtain rank-one perturbations of arbitrarily small norms that have invariant half-spaces. Indeed, since -- for a fixed rank-one $F$ -- almost all perturbations $T+\alpha F$ have invariant half-spaces, for any given $\veps>0$ we can take a \enquote{good} $\alpha<\veps/\norm{F}$. We summarize this in the following corollary:

\begin{corollary}
Let $X$ be a separable Banach space and $T\in\mathcal{B}(X)$ a quasinilpotent operator such that $\sigma_p(T)=\sigma_p(T^*)=\emptyset$. Then for any non-zero $f\in X$, $e^{*}\in X^*$, and $\veps>0$, we can find  rank-one $F\in\mathcal{B}(X)$ with $\Range(F)=[f]$, $\ker{F}=\ker{e^*}$, and $\norm{F}<\veps$ such that $T+F$ has an invariant half-space.
\end{corollary}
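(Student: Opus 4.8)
The plan is to deduce the Corollary directly from Theorem \ref{main2} by a one-line scaling argument, with the only real work being the verification that scaling a fixed rank-one operator leaves its range and kernel untouched while shrinking its norm at will. First I would package the given data into the rank-one operator $G := e^*\otimes f$; by construction $\Range(G)=[f]$ and $\ker G=\ker e^*$, and $G$ is an arbitrary admissible rank-one operator for the purposes of Theorem \ref{main2}. Since $T$ is assumed quasinilpotent with $\sigma_p(T)=\sigma_p(T^*)=\emptyset$, the hypotheses of Theorem \ref{main2} are met, so applying it to $G$ produces a set $E\subseteq\mathbb{C}\setminus\{0\}$ of at most two exceptional scalars such that $T+\alpha G$ has an invariant half-space for every non-zero $\alpha\notin E$.

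Next I would exploit the fact that $E$ is finite. The punctured disc $\{\alpha\in\mathbb{C}\mid 0<\abs{\alpha}<\veps/\norm{G}\}$ is infinite, so it is not contained in the two-point set $E$; hence I can pick a non-zero $\alpha$ in this disc with $\alpha\notin E$. Setting $F:=\alpha G=e^*\otimes(\alpha f)$, I observe that $F$ is again rank one, that $\Range(F)=[\alpha f]=[f]$ and $\ker F=\ker(\alpha G)=\ker e^*$ (both because $\alpha\neq 0$), and that $\norm{F}=\abs{\alpha}\,\norm{G}<\veps$. Finally, $T+F=T+\alpha G$ has an invariant half-space precisely by the choice $\alpha\notin E$, which is exactly the assertion of the Corollary.

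There is no genuine obstacle here: all the analytic content — the Picard-theorem input, the Kadets–Pe{\l}czy\'{n}ski dichotomy, and the argument bounding the bad set $C$ — is already absorbed into Theorem \ref{main2}. The only point worth stating explicitly is the elementary observation that multiplying a rank-one operator by a non-zero scalar preserves both its range and its kernel, so that controlling the single scalar $\alpha$ simultaneously pins down $\Range(F)$ and $\ker F$ to the prescribed subspaces while making $\norm{F}$ arbitrarily small; the finiteness of the exceptional set then guarantees that arbitrarily small \emph{good} values of $\alpha$ remain available.
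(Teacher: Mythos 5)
Your proposal is correct and follows exactly the same route as the paper: the paper also deduces the corollary from Theorem \ref{main2} by picking a \enquote{good} scalar $\alpha$ with $\abs{\alpha}<\veps/\norm{F}$ outside the (at most two-element) exceptional set and rescaling the fixed rank-one operator $e^*\otimes f$. Your explicit verification that scaling by a non-zero $\alpha$ preserves the range and kernel is a detail the paper leaves implicit, but the argument is the same.
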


In the Hilbert space setting we get more specific information about the structure of a quasinilpotent operator.

\begin{corollary} \label{hilbert}
Let $\mathcal{H}$ be a separable Hilbert  space and $T\in\mathcal{B}(\mathcal{H})$ a quasinilpotent operator such that $\sigma_p(T)=\sigma_p(T^*)=\emptyset$.  Then for any rank one operator $F$, and any non-zero $\alpha\in\mathbb{C}$, with possibly two exceptions, there exists an orthogonal projection of infinite rank and co-rank such that $P^{\perp}TP=\alpha P^{\perp}FP$.
\end{corollary}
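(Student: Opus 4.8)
The plan is to read the conclusion off directly from Theorem \ref{main2}, after translating ``invariant half-space'' into the language of orthogonal projections available on a Hilbert space. First I would recall the standard fact that a closed subspace $Y\subseteq\mathcal{H}$ is invariant for a bounded operator $S$ if and only if $P^{\perp}SP=0$, where $P$ is the orthogonal projection onto $Y$; equivalently, $S$ is block upper-triangular for the decomposition $\mathcal{H}=Y\oplus Y^{\perp}$, its $Y\to Y^{\perp}$ corner being exactly $P^{\perp}SP$.

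Next I would apply Theorem \ref{main2} to the rank-one operator $-F$, which is again rank one and leaves the hypotheses $\sigma_p(T)=\sigma_p(T^*)=\emptyset$ in force. This produces, for every non-zero $\alpha\in\mathbb{C}$ outside a set of at most two exceptions, an invariant half-space $Y$ for $T+\alpha(-F)=T-\alpha F$. Letting $P$ be the orthogonal projection onto $Y$, the fact that $Y$ is a half-space---infinite-dimensional and infinite-codimensional---says precisely that $P$ has infinite rank and infinite co-rank.

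Finally, invariance of $Y$ under $T-\alpha F$ reads $P^{\perp}(T-\alpha F)P=0$, and rearranging gives $P^{\perp}TP=\alpha P^{\perp}FP$, which is the desired identity. The exceptional set is inherited verbatim from Theorem \ref{main2} applied to $-F$, so it again has at most two elements.

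I do not expect any genuine obstacle, since all the work is already done in Theorem \ref{main2}; the corollary is simply its reformulation through the projection dictionary. The only point deserving a moment's attention is the sign: feeding $-F$ (equivalently, replacing $\alpha$ by $-\alpha$) into the theorem is what converts the $T+\beta F$ of the theorem into the $T-\alpha F$ needed to yield $P^{\perp}TP=\alpha P^{\perp}FP$ with the correct sign, rather than its negative.
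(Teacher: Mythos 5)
Your proposal is correct and follows exactly the paper's own argument: invoke Theorem \ref{main2} to get an invariant half-space for $T-\alpha F$ (the paper phrases this as applying the theorem with $-\alpha$, you as applying it to $-F$; these are the same thing), take $P$ to be the orthogonal projection onto that half-space, and read off $P^{\perp}(T-\alpha F)P=0$. Your explicit attention to the sign convention is a slightly more careful writeup of the same step the paper does implicitly.
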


\begin{proof}
  Fix a rank one operator $F\in\mathcal{B}(\mathcal{H})$. From Theorem \ref{main2} we have that for all non-zero $\alpha\in\mathbb{C}$, with possibly two exceptions, $T-\alpha F$ has an invariant half-space. Fix such an $\alpha\in\mathbb{C}$, let $Y$ be an invariant half-space for $T-\alpha F$, and let $P\in\mathcal{B}(\mathcal{H})$ be the orthogonal projection onto $Y$ (which clearly has infinite rank and co-rank). Since $Y$ is invariant for $T-\alpha F$ it is easy to see that $P^{\perp}(T-\alpha F)P=0$, and the conclusion follows.
\end{proof}

\section{Open questions}

In light of Theorem \ref{main}, the behaviour of the spectrum of a quasinilpotent operator under rank one perturbations is related to a solution for the invariant Subspace Problem for quasinilpotent operators. This suggests a natural and very important open question in this direction.

{\bf Question 1:} Given $T\in\mathcal{B}(X)$ a quasinilpotent operator acting on an infinite dimensional, separable, complex Banach space, can we find a rank-one operator $F$ such that $T+F$ is quasinilpotent?
\vskip .3cm
Note that a positive answer to this question is not sufficient to conclude a positive solution to the Invariant Subspace Problem for quasinilpotent operators by applying Theorem \ref{main}. Indeed, from $iii)$ in Theorem \ref{main} we would need one more perturbation by a scalar multiple of the same rank one operator that is also quasinilpotent. On the other hand, a negative answer to the \emph{Question 1} will provide a counterexample to ISP for quasinilpotent operators. As we mentioned before, Read \cite{R97} already provided such a counterexample on $l_1$, therefore it would be a natural starting point to examine \emph{Question 1} for Read's operator.

The requirement that $F$ has rank one is very important, as we can always find $F$ of rank two such that $T+F$ is quasinilpotent. Indeed, let $N$ be a rank one nilpotent operator (thus $N^2=0$) and put $S:=(I-N)T(I+N)$. Then $S$ is similar to $T$, therefore $S$ is also quasinilpotent, and an easy calculation shows that $T-S$ has rank at most two.

In Section 2 we constructed an example of a quasinilpotent operator $T\in\mathcal{B}(l_2)$, and a rank one operator $F$ such that $T+F$ is quasinilpotent, but $T+\alpha F$ is not, for all $\alpha\neq 0$, $\alpha\neq 1$. Therefore $F$ does not satisfy $iii)$ in  Theorem \ref{main}, but of course there are other rank one operators that do, as $T$ in that example has plenty of invariant subspaces. Whether a positive solution to \emph{Question 1} already implies a positive solution to ISP for quasinilpotent operators is another important open question:

{\bf Question 2:} If $T\in\mathcal{B}(X)$ is a quasinilpotent operator with the property that there exists $F\in\mathcal{B}(X)$  rank one such that $T+F$ is quasinilpotent, does $T$ have an invariant subspace?
\vskip .3cm
Note that if \emph{Question 1} has  a positive solution for Read's operator, then Read's operator provides a negative answer to \emph{Question 2}.

\vskip .3cm

\emph{Acknowledgments.} It is my pleasure to acknowledge the helpful conversations and feedback provided to me by Heydar Radjavi. This research was supported in part by the Natural Sciences and Engineering Research Council of Canada.

\end{document}